\newtheorem{thm}{Theorem}[section]
\newtheorem{lem}[thm]{Lemma}
\newtheorem{rem}[thm]{Remark}
\newtheorem{prob}[thm]{Problem}
\newcommand{\R}{{\Bbb R}}
\begin{document}

 \title[On bodies with congruent sections or projections]{On bodies with congruent sections or projections}

 \author{Ning Zhang}

 \address{Ning Zhang, Mathematical Sciences Research Institute, Berkeley, CA 94720, USA}
 \email{nzhang2@ualberta.ca}

  \subjclass[2010]{52A20, 52A38}

\keywords{convex body, congruent sections, unique determination}

\begin{abstract} 
 In this paper, we construct two convex bodies $K$ and $L$ in $\R^n$, $n\geq 3$, such that their projections $K|H$, $L|H$ onto every subspace $H$ are congruent, but nevertheless, $K$ and $L$ do not coincide up to a translation or a reflection in the origin. This gives a negative answer to an old conjecture posed by Nakajima and S\"uss.
\end{abstract}

\maketitle

\section{Introduction and main results}
In this paper we give a negative answer to an old conjecture first considered by Nakajima \cite{Na} and S\"uss \cite{Su} in 1932 (see, for example, the book of R.~J.~Gardner ``Geometric tomography'' \cite[Problem 3.2, p.~125 and Problem 7.3, p.~289]{Ga}).

\begin{prob}\label{p1.1.12}
	Suppose that $2\leq k\leq n-1$ and that $K$ and $L$ are star bodies
	in $\R^n$ such that the section $K\cap H$ is congruent to $L\cap H$ for all $H\in G(n,k)$.
	Do $K$ and $L$ coincide up to a reflection in the origin?
\end{prob}
Here, $G(n,k)$ denotes the Grassmanian of $k$-dimensional subspaces of $\R^n$ and $K\cap H$ being congruent to $L\cap H$ means that there exists an orthogonal transformation $\varphi$ in $H$ such that $\varphi(K\cap H)$ is a translate of $L\cap H$. 

The following question is a ``dual version'' of Problem \ref{p1.1.12}.

\begin{prob}\label{p1.1.13}
	Suppose that $2\leq k\leq n-1$ and that $K$ and $L$ are convex bodies
	in $\R^n$ such that the orthogonal projection $K| H$ is congruent to $L| H$ for all $H\in G(n,k)$.
	Do $K$ and $L$ coincide up to a translation and a reflection in the origin?
\end{prob}

Myroshnychenko and Ryabogin in \cite{MR} gave a positive answer to both problems in the class of polytopes. For Problem \ref{p1.1.12}, Gardner \cite[Theorem 7.1.1]{Ga} obtained that $K,L$ coincide up to a translation in the case when $K,L$ are convex bodies and $K\cap H$ is a translate of $L\cap H$ for every $H$. Nakajima \cite{Na} and S\"uss \cite{Su} showed that the answer to Problem \ref{p1.1.13} is affirmative in the case when $K|H$ is a translate of $L|H$ for every $H$ (see also Ryabogin \cite{R1}). Later Golubyatnikov \cite{Go} gave an affirmative answer to Problem \ref{p1.1.13} in $\R^3$ when $K|H$ is a direct congruent of $L|H$ for every $H$ and $K|H$, $L|H$ do not have $SO(2)$-symmetries. Here, the set $K|H$ is directly congruent to $L|H$ means that there exists a special orthogonal transformation $\varphi\in SO(2)$ in $H$ such that $\varphi(K| H)=L| H$; and $A$ has $SO(2)$-symmetry means that there exists a $\phi\in SO(2)$ such that $\phi(A)$ is parallel to $A$. In the cases $k=3,4$, some partial results were also obtained by Alfonseca, Cordier, and Ryabogin in \cite{ACR} and \cite{ACR2}. See also \cite{R2}. In particular, using our construction one can also obtain negative answers to Problem 3, 4, 5, 9, 10 posed there.
 
Our main results are the following.

\begin{thm}\label{t1.3}
	There exist two convex bodies $K,L\in\R^n$ ($n\geq 3$) such that $K\neq L$ and $K\neq -L$ but nevertheless for every $\xi\in S^{n-1}$ there is an orthogonal transformation $\phi_\xi\in O(n-1)$ in $\xi^\perp$ satisfying $\phi_\xi(K\cap\xi^\perp)=L\cap\xi^\perp$.
\end{thm}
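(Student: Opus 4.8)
The plan is to look for $K$ and $L$ that are close to the unit ball, so that all hyperplane sections are convex bodies close to the $(n-1)$-dimensional ball, and to encode the ``extra freedom'' in a perturbation function on the sphere. Concretely, I would write the boundaries of $K$ and $L$ as radial graphs $\rho_K(\xi)=1+f(\xi)$ and $\rho_L(\xi)=1+g(\xi)$ for small smooth functions $f,g$ on $S^{n-1}$, and aim to choose $f\neq \pm g$ (not even differing by the restriction of a linear function, to rule out translations) in such a way that for each $\xi\in S^{n-1}$ the section $K\cap\xi^\perp$ can be rotated within $\xi^\perp$ onto $L\cap\xi^\perp$. The first reduction is to understand, to first order in the perturbation, what ``congruent sections'' means: the section of $K$ by $\xi^\perp$ has radial function $1+f|_{\xi^\perp\cap S^{n-1}}$, and an orthogonal map $\phi_\xi$ of $\xi^\perp$ acts on this by precomposition. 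So the requirement becomes: for every $\xi$ there is $\phi_\xi\in O(\xi^\perp)$ with $f(\phi_\xi\eta)=g(\eta)$ for all unit $\eta\perp\xi$ — at least to leading order, with the genuine (nonlinear) statement to be recovered afterward by a fixed-point/implicit-function argument.

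The key idea I would pursue is to exploit the abundance of great subspheres through a clever symmetry. Suppose $f$ and $g$ are both invariant under a large subgroup, or suppose $g=f\circ R$ for a fixed $R\in O(n)$ that is \emph{not} $\pm\mathrm{id}$; then on the section by $\xi^\perp$ we would want $\phi_\xi$ to implement $R$ restricted to the great subsphere — but $R$ need not preserve $\xi^\perp$. The honest approach is therefore to build $f$ with enough internal symmetry that its restriction to \emph{every} great subsphere lies in a single $O(n-1)$-orbit of the restriction of $g$. A natural way to arrange this is to take $f$ depending only on the distance to a fixed pair of antipodal points (a ``zonal'' function about an axis $u$) and $g$ the analogous zonal function about a different axis $v$: then $f|_{\xi^\perp}$ is a zonal function on $\xi^\perp$ about the projected axis $u|\xi^\perp$, and $g|_{\xi^\perp}$ is zonal about $v|\xi^\perp$, and these two differ by a rotation of $\xi^\perp$ \emph{provided the profile functions agree and the two projected axes have the same length}, i.e. provided $|\langle u,\xi\rangle|=|\langle v,\xi\rangle|$ for all $\xi$ — which forces $v=\pm u$, a dead end. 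This tells me the construction must be subtler: $f$ cannot be zonal; instead I would let the section profile be forced to be rotation-equivalent by making $f|_{\xi^\perp}$ genuinely depend on $\xi$ in a way that washes out the obstruction.

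The step I expect to be the crux is exactly this: producing a single pair $f\neq\pm g$ on $S^{n-1}$ (modulo linear functions, for $n\ge 3$) whose restrictions to every great subsphere are $O(n-1)$-congruent. I would attack it via spherical harmonics: decompose $f=\sum_k f_k$, $g=\sum_k g_k$ into components of degree $k$; the restriction operator to a great subsphere intertwines these decompositions by known Gegenbauer/Funk-type formulas, and $O(n-1)$-congruence of the restrictions is controlled degree-by-degree by the norms of the harmonic components on each subsphere. The goal is to choose $f,g$ supported on a small set of degrees so that these restriction-norm conditions hold identically in $\xi$ yet $f\ne\pm g$; I suspect one can do this starting at a fairly low degree (degree $3$ or $4$), since degree $1$ is killed by the translation ambiguity and degree $2$ components transform too rigidly. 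Once such $f,g$ are found at the linearized level, the remaining work — upgrading to an exact statement — is a perturbation argument: replace $f,g$ by $tf,tg$ for small $t$, show the map $(\xi,\phi)\mapsto$ (defect in congruence of the $t$-perturbed sections) vanishes to first order and is transverse in $\phi$, and apply a quantitative implicit function theorem uniformly in $\xi$ to correct $\phi_\xi$; convexity of $K,L$ is automatic for $t$ small since $\rho=1+tf$ with bounded $C^2$ norm stays convex. I would also need to verify the rotations $\phi_\xi$ can be chosen to depend measurably (indeed continuously) on $\xi$, which follows from compactness of $O(n-1)$ and the transversality. The main obstacle, to repeat, is the algebraic/harmonic-analytic heart — exhibiting the non-trivial pair $f,g$ — and everything after that is soft analysis.
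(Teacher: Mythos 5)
There is a genuine gap, and in fact two. First, you explicitly defer the crux --- exhibiting a pair $f\neq\pm g$ whose restrictions to every great subsphere are $O(n-1)$-congruent --- to an unspecified spherical-harmonics computation. That construction is the entire content of the theorem; without it nothing has been proved. Second, even granting such a pair at the linearized level, your proposed upgrade to exact congruence via an implicit function theorem cannot work as stated: for a fixed $\xi$ the ``defect in congruence'' is a \emph{function} on the $(n-2)$-sphere $S^{n-1}\cap\xi^\perp$ (an infinite-dimensional quantity), while $\phi_\xi$ ranges over the finite-dimensional group $O(n-1)$. No transversality in $\phi$ can kill an arbitrary small defect function, so first-order congruence does not bootstrap to exact congruence. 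Any successful construction must make the sections congruent \emph{exactly}, not approximately.

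That is precisely how the paper proceeds, and it is instructive to compare. The base body is not the ball but an ellipsoid $E$ with pairwise distinct semi-axes $a_1>\dots>a_n$; this escapes the dead end you correctly identified for zonal perturbations of the ball, because every hyperplane section $E\cap\xi^\perp$ is an ellipsoid with canonical extremal directions $\pm\eta_1$ (maximum of $\rho_E$ on $\xi^\perp$) and $\pm\eta_2$ (minimum), and the reflection of $\xi^\perp$ in either direction maps $E\cap\xi^\perp$ to itself. The perturbations defining $K$ and $L$ are supported on small caps $I_1$ around $e_1$ and $I_2$ around $e_n$ and, crucially, are functions of $\rho_E(\theta)$ alone, so their level sets are level sets of $\rho_E$; the perturbation on $I_1$ is moved to $-I_1$ in $L$ while the one on $I_2$ is kept. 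A cone argument (Lemma 2.3/2.5 of the paper) shows that whenever $S^{n-1}\cap\xi^\perp$ meets $I_1$, the extremal direction $\eta_1$ lies in $I_1$ and $\eta_2$ in $I_2$, and the reflection in $\eta_2$ carries $I_1\cap\xi^\perp$ onto $-I_1\cap\xi^\perp$ while preserving $I_2\cap\xi^\perp$ and every level set of $\rho_E$ on $\xi^\perp$. Hence $\phi_\xi(K\cap\xi^\perp)=L\cap\xi^\perp$ holds exactly, with no limiting or correction argument needed. You would need to either find this kind of exact mechanism or replace your soft-analysis step with something that genuinely closes the infinite-dimensional gap.
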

Here, $\xi^\perp$ is the hyperplane perpendicular to $\xi$.
\begin{thm}\label{t1.4}
	There exist two convex bodies $K,L\in \R^n$ ($n\geq 3$) such that $K\neq L$ and $K\neq -L$ but nevertheless for every $\xi\in S^{n-1}$ there is an orthogonal transformation $\phi_\xi\in O(n-1)$ in $\xi^\perp$ satisfying $\phi_\xi(K|\xi^\perp)=L|\xi^\perp$.
\end{thm}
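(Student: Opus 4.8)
The plan is to deduce Theorem~\ref{t1.4} from Theorem~\ref{t1.3} by passing to polar bodies, using the classical duality that turns central sections into orthogonal projections. Recall that if $M\subset\R^n$ is a convex body with $0\in\inte M$, then its polar $M^{\circ}=\{x:\langle x,y\rangle\le1\ \text{for all }y\in M\}$ is again such a body, and for every subspace $H$ of $\R^n$ one has
\[
M^{\circ}|H=(M\cap H)^{\circ_{H}},
\]
where $(\cdot)^{\circ_{H}}$ denotes polarity performed inside $H$. Indeed, for $u\in H$,
\[
h_{M^{\circ}|H}(u)=h_{M^{\circ}}(u)=\frac{1}{\rho_{M}(u)}=\frac{1}{\rho_{M\cap H}(u)}=h_{(M\cap H)^{\circ_{H}}}(u),
\]
so the two bodies in $H$ share a support function and hence coincide.

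Next I would apply this to the pair $K,L$ furnished by Theorem~\ref{t1.3}; we may assume $0\in\inte K\cap\inte L$, as holds for the bodies constructed there. The feature we exploit is that the sectional congruences in Theorem~\ref{t1.3} are realized by genuine orthogonal maps $\phi_\xi\in O(n-1)$ of the hyperplanes $\xi^\perp$, \emph{with no translation term}: $\phi_\xi(K\cap\xi^\perp)=L\cap\xi^\perp$. Put $\widehat K:=K^{\circ}$ and $\widehat L:=L^{\circ}$, again convex bodies with $0$ in their interiors. Fix $\xi\in S^{n-1}$. Since an orthogonal transformation commutes with polarity (for $A\subset\xi^\perp$ one has $(\phi_\xi A)^{\circ_{\xi^\perp}}=\phi_\xi(A^{\circ_{\xi^\perp}})$, because $\langle x,\phi_\xi a\rangle=\langle\phi_\xi^{-1}x,a\rangle$), the displayed identity yields
\[
\phi_\xi\bigl(\widehat K|\xi^\perp\bigr)=\phi_\xi\bigl((K\cap\xi^\perp)^{\circ_{\xi^\perp}}\bigr)=\bigl(\phi_\xi(K\cap\xi^\perp)\bigr)^{\circ_{\xi^\perp}}=(L\cap\xi^\perp)^{\circ_{\xi^\perp}}=\widehat L|\xi^\perp .
\]
Thus $\widehat K|\xi^\perp$ and $\widehat L|\xi^\perp$ are congruent, via the very same $\phi_\xi$, for every $\xi\in S^{n-1}$.

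It remains to check $\widehat K\neq\widehat L$ and $\widehat K\neq-\widehat L$. Polarity about the origin is an involution on convex bodies containing $0$ in their interior, hence injective, so $K\neq L$ forces $\widehat K\neq\widehat L$; and since $(-L)^{\circ}=-(L^{\circ})$, the relation $K\neq-L$ forces $\widehat K\neq-\widehat L$. Both hypotheses are guaranteed by Theorem~\ref{t1.3}, completing the argument.

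I do not expect a serious obstacle here: the substance lies entirely in Theorem~\ref{t1.3}, and Theorem~\ref{t1.4} is a formal consequence of it. The only points demanding care are that the bodies in Theorem~\ref{t1.3} have the origin in their interiors and that the congruences there are free of translations — the latter being precisely what lets polarity transport them intact, since polarity behaves badly under translations. (If one additionally wishes to exclude $\widehat K$ being a translate of $\pm\widehat L$, matching the full formulation of Problem~\ref{p1.1.13}, this requires a short separate normalization argument for $K,L$, which I do not pursue here.)
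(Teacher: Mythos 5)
Your argument is correct. The paper in fact never writes out a proof of Theorem~\ref{t1.4} at all --- its stated organization covers only Theorems~\ref{t1.3} and~\ref{t1.5} --- so the comparison here is with the proof the author evidently had in mind rather than with one on the page. The two natural routes are (i) yours: polarize the bodies of Theorem~\ref{t1.3} and invoke the section--projection duality $M^{\circ}|H=(M\cap H)^{\circ_H}$ (valid since $0\in{\mathrm{int}}\,K\cap{\mathrm{int}}\,L$ for the constructed bodies), together with the fact that orthogonal maps of $H$ commute with polarity in $H$; or (ii) redo Section~\ref{s3} verbatim with the support function of the ellipsoid in place of its radial function, perturbing $h_E$ on $I_1$, $-I_1$, $I_2$ and checking the same level-set reflection symmetries for $h_E$ restricted to $\xi^\perp$. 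Route (i) is cleaner: it requires no fresh verification of convexity or of the congruences, and you correctly isolate the two points on which it hinges --- that the congruences of Theorem~\ref{t1.3} are realized by linear orthogonal maps with no translation term (polarity does not commute with translations), and that $(-L)^{\circ}=-L^{\circ}$ together with injectivity of polarity transports $K\neq L$, $K\neq -L$ to the polars. Since Theorem~\ref{t1.4}, like Theorem~\ref{t1.3}, asserts congruence by a pure orthogonal map rather than congruence up to translation, no further normalization is needed, and your closing caveat is only relevant to the stronger formulation in Problem~\ref{p1.1.13}.
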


If we consider rotations only (no translation), Ryabogin gave an affirmative answer to Problem \ref{p1.1.12} and \ref{p1.1.13} in $\R^3$ (see \cite{R} for details). However, we show here that in $\R^n$ ($n\geq 4$), a counterexample can be constructed.
\begin{thm}\label{t1.5}
	There exists two convex bodies $K,L\in \R^n$ ($n\geq 4$) such that $K\neq L$ and $K\neq -L$ but neverthless for every $\xi\in S^{n-1}$ there is a rotation $\phi_\xi\in SO(n-1)$ in $\xi^\perp$ satisfying $\phi_\xi(K\cap\xi^\perp)=L\cap\xi^\perp$.
\end{thm}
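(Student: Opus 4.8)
The plan is to manufacture $K$ and $L$ by \emph{symmetrizing} the construction behind Theorem~\ref{t1.3} and then using that extra symmetry to turn the $O(n-1)$-congruences it produces into $SO(n-1)$-congruences. The guiding remark is elementary: if $g\in O(n-1)$ satisfies $g(K\cap\xi^\perp)=L\cap\xi^\perp$ and $K\cap\xi^\perp$ admits a self-congruence $h$ with $\det h=-1$, then one of $g$ and $gh$ lies in $SO(n-1)$ and still maps $K\cap\xi^\perp$ onto $L\cap\xi^\perp$. So the whole problem reduces to carrying out a construction of the type in Theorem~\ref{t1.3} in which, \emph{in addition}, every central hyperplane section of $K$ carries an orientation-reversing self-congruence, while still arranging $K\neq L$ and $K\neq -L$.

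To force those self-congruences I would require $K$ and $L$ to be invariant under the group $O(2)$ acting on a fixed coordinate $2$-plane $E\subset\R^n$ — that is, $K$ and $L$ are bodies of revolution about the codimension-$2$ subspace $E^\perp$; such bodies correspond bijectively to convex bodies in $\R^{n-1}$ symmetric with respect to a fixed hyperplane, so this is only a mild restriction. Then for any $\xi\in S^{n-1}$, writing $\xi=\eta+u$ with $\eta\in E^\perp$ and $u\in E$, let $r_u\in O(2)$ be the reflection of $E$ in the line $\R u$ (any line if $u=0$), extended by the identity on $E^\perp$. It fixes $\xi$, hence preserves $\xi^\perp$ and restricts there to a self-congruence of $K\cap\xi^\perp$; since $r_u$ is a reflection of $\R^n$ fixing the direction $\xi$, the determinant of $r_u|_{\xi^\perp}$ equals $-1$. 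Thus every central section of $K$ has the required symmetry, and the remark of the first paragraph upgrades the $O(n-1)$-congruences coming from the symmetrized construction to $SO(n-1)$-congruences, which is what Theorem~\ref{t1.5} asks for.

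What remains — and what I expect to be the heart of the matter — is to perform the construction of Theorem~\ref{t1.3} \emph{inside the class of $O(2)$-invariant bodies} without collapsing the example, and this is exactly where the hypothesis $n\ge 4$ enters. When $n=3$ an $O(2)$-invariant body is an ordinary body of revolution, and a short computation with meridian profiles (the generic section has a unique symmetry axis, which the matching rotation must fix, leaving only the identity or the half-turn) shows that two such bodies with $SO(2)$-congruent central sections must coincide up to $\pm\mathrm{id}$ — in agreement with Ryabogin's positive answer in $\R^3$ and showing that no freedom is lost there. For $n\ge 4$, however, the codimension-$2$ revolution leaves a genuine $(n-1)$-dimensional (hyperplane-symmetric) meridian body to play with, and the freedom underlying the construction in Theorem~\ref{t1.3} should persist under the symmetry constraint. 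The delicate point is to tune the symmetrized construction so that $K$ and $L$ are \emph{not} related by $\pm\mathrm{id}$: imposing $O(2)$-invariance is as much symmetry as the method tolerates — more forces $K=\pm L$, as the $n=3$ computation shows — so one must verify that $O(2)$-invariance, and nothing beyond it, is compatible with $K\neq\pm L$. Granting this, the three ingredients (genuine non-congruence, $O(n-1)$-congruent central sections, and an orientation-reversing self-congruence of every section) combine to yield the theorem.
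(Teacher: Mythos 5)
Your opening reduction is sound and is in fact the mechanism the paper itself uses: if $g\in O(n-1)$ carries $K\cap\xi^\perp$ onto $L\cap\xi^\perp$ and $h$ is an orientation-reversing self-congruence of $K\cap\xi^\perp$, then one of $g$, $gh$ is the desired rotation. But the route you choose to produce the self-congruences $h$ --- imposing global $O(2)$-invariance and redoing the whole construction of Theorem \ref{t1.3} inside that restricted class --- is never carried out. You explicitly defer it (``what remains \dots is the heart of the matter,'' ``Granting this''), and what is deferred is not a routine verification: you would have to specify the symmetrized bodies, re-prove that every central section of the new $K$ is $O(n-1)$-congruent to the corresponding section of the new $L$ (the paper's verification leans on Lemmas \ref{l2.3} and \ref{l2.5}, which are stated for ellipsoids with pairwise distinct semi-axes, whereas an $O(2)$-invariant ellipsoid must have two equal axes), and re-prove $K\neq\pm L$. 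As written, the proposal reduces the theorem to an unproved claim that contains essentially all of its content.

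The detour is also unnecessary, and this is where your proposal diverges from the paper. For the bodies already built in Section \ref{s3}, every section $E\cap\xi^\perp$ of the underlying ellipsoid is an $(n-1)$-dimensional ellipsoid with $n-1\geq 3$ principal directions; besides the extremal directions $\eta_1,\eta_2$ singled out by Lemma \ref{l2.5} there is therefore a \emph{spare} principal direction $\zeta$, and the reflection of $\xi^\perp$ fixing $\eta_1$ and $\eta_2$ but reversing $\zeta$ preserves $\rho_E|_{\xi^\perp}$ and the perturbation regions $\pm I_1\cap\xi^\perp$, $I_2\cap\xi^\perp$, hence is exactly the orientation-reversing self-congruence of $K\cap\xi^\perp$ you want --- with no symmetrization and no new construction. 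This is precisely what the paper exploits, packaged as Lemma \ref{l2.6}: for $n$ even the reflection $\tilde{\phi}_{\xi,\eta_2}$ in a single vector already lies in $SO(n-1)$, and for $n$ odd one instead reflects in the $2$-plane spanned by $\eta_2$ and a spare principal direction, which differs from $\tilde{\phi}_{\xi,\eta_2}$ by such a self-congruence and has determinant $+1$. Your explanation of why $n=3$ is excluded (the $2$-dimensional section has no spare axis) is the right intuition, but to make the argument complete you should either supply the $O(2)$-invariant construction in full or, more economically, observe that the spare-axis reflections above already exist for the unsymmetrized bodies of Theorem \ref{t1.3}.
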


Our idea resembles the one from papers by Ryabogin and Yaskin \cite{RY} and Nazarov, Ryabogin, and Zvavitch \cite{NRZ}, but it is more elaborated. It is more difficult to verify that the required bodies have the corresponding pairwise congruent sections (or projections) rather than that the intrinsic volumes of sections (or projections) coincide.

The paper is organized as follows. In Section \ref{s2} we formulate some definitions and prove our main auxiliary results. Section \ref{s3} gives the proof of Theorem \ref{t1.3}. We prove Theorem \ref{t1.5} in Section \ref{s4}.

\section{Notation and auxiliary lemmata}\label{s2}
We will set the following standard notation. The unit sphere in the $n$-dimensional Euclidean space $\R^n$, $n\geq 3$, is $S^{n-1}$. We denote $\{e_1,e_2,\dots,e_n\}$ to be the standard basis in $\R^n$. For any $\xi\in S^{n-1}$, we denote $\xi^\perp:=\{x\in\R^n:\langle x, \xi\rangle=0\}$ to be a hyperplane which is perpendicular to $\xi$. Here, $\langle x, \xi\rangle$ is the usual inner product in $\R^n$. We also denote $\xi^\perp_+:=\{x\in\R^n:\langle x, \xi\rangle\geq 0\}$ to be the ``upper half'' of $\xi^\perp$ and $\xi^\perp_-:=\{x\in\R^n:\langle x, \xi\rangle\leq 0\}$ to be the ``lower half'' of $\xi^\perp$. The Grassmann manifold of $k$-dimensional subspaces in $\R^n$ is denoted by $G(n,k)$. $O(k)$ and $SO(k)$, $2\leq k\leq n$, are the subgroups of the orthogonal group $O(n)$ and the sphecial orthogonal group $SO(n)$ in $\R^n$.

Given a $\xi\in S^{n-1}$, for any $\eta\in S^{n-1}\cap \xi^\perp$, in $\xi^\perp$ the reflection in $\eta$, denoted by $\tilde{\phi}_{\xi,\eta}$, is an orthogonal transformation such that $\tilde{\phi}_{\xi,\eta}(x)=-x+2\langle x, \eta \rangle \eta, \forall x\in \xi^\perp$. A set $A\subset \xi^\perp$ is reflection symmetric in a direction $\eta$ if $\tilde{\phi}_{\xi,\eta}(A)=A$. For $H\in G(n-1,2)$ being a $2$-dimensional subspace of $\xi^\perp$ ($n\geq 4$), in $\xi^\perp$ the reflection in $H$, denoted by $\tilde{\phi}_{\xi,H}$, is an orthogonal transformation such that for any $x\in \xi^\perp$, $x+\tilde{\phi}_{\xi,H}(x)\in H$.

We refer to \cite[Chapter 1]{Ga} for the deﬁnitions of convex and star bodies. A {\em convex body} is a compact set $K$ with non-empty interior such that the segment joining every pair of points in $K$ is contained in $K$. 

A set $K \subset \R^n$ is said to be {\em star-shaped} with respect to a point $o$ if the line segment from $o$ to any point in $K$ is contained in $K$. A {\em shar body} $K \subset \R^n$ is a nonempty, compact, star-shaped set with respect to the origin, the {\em radial function} of a star body $K\subset \R^n$ at $x\in \R^n$ is defined as $\rho_K(x):=\sup\{s\geq 0:sx\in K\}$.

Here and below, let $E$ be an ellipsoid in $\R^n$ under the standard basis $\{e_1,e_2,\dots,e_n\}$, that is, $E:=\{x\in \R^n: \sum_{i=1}^n\frac{x_i^2}{a_i^2}=1\}$
with $a_1>a_2>\cdots>a_n>0$. We denote $\rho_E$ to be the radial function of $E$.

We will denote $C(S^{n-1})$ to be the class of all continuous functions on $S^{n-1}$ and $C^k(S^{n-1})$, $k\geq 1$, to be the class of all functions with $k$ continuous derivatives on $S^{n-1}$. The {\em level set} of a function $f\in C(S^{n-1})$ at a value $\tau$ is $\mathcal{L}_\tau f:=\{\theta\in S^{n-1}: f(\theta)=\tau\}$.

The {\em radial extension} of a set $E\subset S^{n-1}$ is denoted by $rad(E):=\{\lambda x: \lambda\geq 0,~x \in E\}\subset \R^n$.

Given $\xi\in S^{n-1}$, an {\em elliptic cone} around $\xi$ is a cone $C\subset \R^n$ such that $C\cap (\xi^\perp+t\xi)$ is an $(n-1)$-dimensional ellipsoid for every $t>0$.

\begin{lem}\label{l2.3}
	Let $E$ be an ellipsoid in $\R^n$. Then if $\tau_1\in (a_2,a_1)$ the radial extension of the upper level set $\{\rho_E(\theta)\geq \tau_1\}\cap (e_1)^\perp_+$ is an elliptic cone around $e_1$ and if $\tau_2\in (a_n,a_{n-1})$ the radial extension of the lower level set $\{\rho_E(\theta)\leq \tau_2\}\cap (e_n)^\perp_+$ is an elliptic cone around $e_n$.
\end{lem}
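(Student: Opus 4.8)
The plan is to work directly with the defining equation of the ellipsoid. Recall that for $\theta\in S^{n-1}$, the radial function satisfies $\rho_E(\theta) = r$ precisely when $r\theta\in E$, i.e. $\sum_{i=1}^n \frac{r^2\theta_i^2}{a_i^2}=1$, which gives the explicit formula
\[
\rho_E(\theta)^2 = \left(\sum_{i=1}^n \frac{\theta_i^2}{a_i^2}\right)^{-1}.
\]
Hence the level condition $\rho_E(\theta)=\tau$ translates into the quadratic relation $\sum_{i=1}^n \frac{\theta_i^2}{a_i^2} = \frac{1}{\tau^2}$ on $S^{n-1}$. First I would parametrize a point $x\in (e_1)^\perp_+$ of norm not necessarily one by writing $x = t e_1 + y$ with $t\ge 0$ and $y\in e_1^\perp\cong\R^{n-1}$, and ask when the unit vector $x/|x|$ lies in the upper level set $\{\rho_E\ge\tau_1\}$. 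Since $\rho_E(\theta)^2$ is a decreasing function of $\sum \theta_i^2/a_i^2$, the inequality $\rho_E(x/|x|)\ge \tau_1$ is equivalent to $\sum_i \frac{(x/|x|)_i^2}{a_i^2}\le \frac{1}{\tau_1^2}$, i.e., after clearing $|x|^2=t^2+|y|^2$,
\[
\frac{t^2}{a_1^2} + \sum_{i=2}^n \frac{y_i^2}{a_i^2} \;\le\; \frac{t^2+|y|^2}{\tau_1^2}.
\]
Rearranging, this is $\left(\frac{1}{a_1^2}-\frac{1}{\tau_1^2}\right)t^2 \le \sum_{i=2}^n\left(\frac{1}{\tau_1^2}-\frac{1}{a_i^2}\right)y_i^2$. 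Because $\tau_1\in(a_2,a_1)$, the coefficient $\frac{1}{a_1^2}-\frac{1}{\tau_1^2}$ is strictly positive, and each coefficient $\frac{1}{\tau_1^2}-\frac{1}{a_i^2}$ for $i\ge 2$ is also strictly positive (since $a_i\le a_2<\tau_1$). So the radial extension $rad\big(\{\rho_E\ge\tau_1\}\cap(e_1)^\perp_+\big)$ is exactly the solid cone $\{te_1+y: t\ge 0,\ \alpha t^2\le \sum_{i\ge2}\beta_i y_i^2\}$ with all $\alpha,\beta_i>0$; intersecting with the hyperplane $t=c>0$ gives the solid $(n-1)$-dimensional ellipsoid $\{\sum_{i\ge2}\beta_i y_i^2\le \alpha c^2\}$, so by definition this is a solid elliptic cone around $e_1$. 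The second statement is entirely symmetric: with $x=se_n+z$, $z\in e_n^\perp$, the condition $\rho_E(x/|x|)\le\tau_2$ becomes $\sum_i \frac{(x/|x|)_i^2}{a_i^2}\ge \frac{1}{\tau_2^2}$, which after clearing denominators reads $\left(\frac{1}{\tau_2^2}-\frac{1}{a_n^2}\right)s^2 \le \sum_{i=1}^{n-1}\left(\frac{1}{a_i^2}-\frac{1}{\tau_2^2}\right)z_i^2$; since $\tau_2\in(a_n,a_{n-1})$ we again have all coefficients strictly positive, so the radial extension is a solid elliptic cone around $e_n$.

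The only genuinely substantive point — and the one I would take care to state cleanly — is the sign analysis: the whole lemma rests on the fact that when $\tau$ lies strictly between two consecutive semi-axes, exactly the "right" coefficients come out positive, which is what makes the quadric a genuine (solid) ellipsoidal cone rather than a hyperboloidal region or the empty/degenerate case. I expect no real obstacle beyond bookkeeping; the one thing to be slightly careful about is the interpretation of "elliptic cone" in the lemma (the boundary quadric versus the solid region, and whether the relevant level set is meant as the set on the sphere or its radial extension as a subset of $\R^n$) and to make sure the strict inequalities $a_2<\tau_1$ and $\tau_2<a_{n-1}$ are used in full strength so that no coefficient degenerates to zero. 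I would also note in passing that one could instead prove this more synthetically, using that a linear image of a sphere is an ellipsoid and that the cone over a spherical cap of $E$ cut by a coordinate hyperplane pulls back under the diagonal map $\mathrm{diag}(a_1,\dots,a_n)$ to a cone over a cap on the round sphere, but the direct computation above is shortest and makes the elliptic cone explicit.
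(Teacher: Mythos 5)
Your approach is the same as the paper's: pass to the radial extension, clear the normalization $|x|^2$, and read off a homogeneous quadratic inequality whose coefficient signs are governed by where $\tau$ sits relative to the semi-axes. (The paper parametrizes by the points of $E$ itself with $|x|\ge\tau_1$ rather than by unit vectors, but after clearing denominators the two computations yield the same homogeneous inequality; the paper's is yours multiplied through by $\tau_1^2$.)

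However, the sign analysis --- the step you yourself single out as the only substantive point --- is exactly backwards. Since $\tau_1<a_1$ we have $\frac{1}{a_1^2}-\frac{1}{\tau_1^2}<0$, and since $\tau_1>a_2\ge a_i$ for $i\ge2$ we have $\frac{1}{\tau_1^2}-\frac{1}{a_i^2}<0$: every coefficient in your rearranged inequality is strictly \emph{negative}, not positive. Consequently the set $\{te_1+y:\ \alpha t^2\le \sum_{i\ge2}\beta_i y_i^2\}$ with $\alpha,\beta_i>0$ that you display is the \emph{complement} of the cone; its slice at $t=c$ is $\{\sum_{i\ge2}\beta_i y_i^2\ge\alpha c^2\}$, the exterior of an ellipsoid, not the solid ellipsoid you then write down. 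The fix is simply to multiply through by $-1$ and flip the inequality: the region is $\bigl(\frac{1}{\tau_1^2}-\frac{1}{a_1^2}\bigr)t^2\ \ge\ \sum_{i\ge2}\bigl(\frac{1}{a_i^2}-\frac{1}{\tau_1^2}\bigr)y_i^2$ with all coefficients positive, which is the solid elliptic cone around $e_1$ and agrees with the paper's form $(1-\tau_1^2/a_1^2)x_1^2\ge\sum_{i=2}^n(\tau_1^2/a_i^2-1)x_i^2$. The identical correction is needed in your second case, where $\frac{1}{\tau_2^2}-\frac{1}{a_n^2}<0$ and $\frac{1}{a_i^2}-\frac{1}{\tau_2^2}<0$ for $i\le n-1$. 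With these sign corrections the argument is complete and coincides with the paper's.
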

\begin{proof}
	When $\tau_1\in (a_2,a_1)$, the radial extension of $\{\rho_E(\theta)\geq \tau_1\}\cap (e_1)^\perp_+$ is, for $\lambda\geq 0$,
	\begin{align*}
	&\lambda(\{x\in (e_1)^\perp_+: \sum_{i=1}^n x_i^2\geq \tau_1^2\}\cap \{x\in (e_1)^\perp_+: \sum_{i=1}^n\frac{x_i^2}{a_i^2}=1\})\\
	&=\{x\in (e_1)^\perp_+: \sum_{i=1}^n x_i^2 \geq \tau_1^2 (\sum_{i=1}^n\frac{x_i^2}{a_i^2}) \}=\{x\in (e_1)^\perp_+: \sum_{i=1}^n(1-\frac{\tau_1^2}{a_i^2})x_i^2\geq 0\}\\
	&=\{x\in (e_1)^\perp_+: (1-\frac{\tau^2}{a_1^2})x^2\geq \sum_{i=2}^n(\frac{\tau_1^2}{a_i^2}-1)x_i^2\},
	\end{align*}
	which is an elliptic cone around $e_1$ in $(e_1)^\perp_+$.
	
	When $\tau_2\in (a_n,a_{n-1})$, the radial extension of $\{\rho_E(\theta)\leq \tau_2\}\cap (e_n)^\perp_+$ is, for $\lambda\geq 0$,
	\begin{align*}
	&\lambda(\{x\in (e_n)^\perp_+: \sum_{i=1}^n x_i^2\geq \tau_2^2\}\cap \{x\in (e_n)^\perp_+: \sum_{i=1}^n\frac{x_i^2}{a_i^2}=1\})\\
	&=\{x\in (e_n)^\perp_+: \sum_{i=1}^n x_i^2 \leq \tau_2^2 (\sum_{i=1}^n\frac{x_i^2}{a_i^2})\}\\
	&=\{x\in (e_n)^\perp_+: \sum_{i=1}^n(1-\frac{\tau_2^2}{a_i^2})x_i^2\leq 0\}\\
	&=\{x\in (e_n)^\perp_+: (\frac{\tau_2^2}{a_n^2}-1)x_n^2\geq \sum_{i=1}^{n-1}(1-\frac{\tau_2^2}{a_i^2})x_i^2 \},
	\end{align*}
	which is an elliptic cone around $e_n$ in $(e_n)^\perp_+$.
\end{proof}

\begin{lem}\label{l2.5}
	Let $E$ be an ellipsoid in $\R^n$. For any subsphere $S^2\cap \xi^\perp$ intersecting $\{\rho_E(\theta)<a_{n-1}\}$ and $\{\rho_E(\theta)>a_2\}$, set $\tau_2:=\min\{\rho_E(\theta): \theta\in S^{n-1}\cap\xi^\perp\}$ and $ \tau_1:=\max\{\rho_E(\theta): \theta\in S^{n-1}\cap\xi^\perp\}$. Then we have
	$$
	L_{\tau_1}\rho_E\cap S^2\cap \xi^\perp=\{\eta_1,-\eta_1\}
	$$
	and
	$$
	L_{\tau_2}\rho_E\cap S^2\cap \xi^\perp=\{\eta_2,-\eta_2\}.
	$$
		\begin{figure}[h]
			\center	\includegraphics[width=0.8\linewidth]{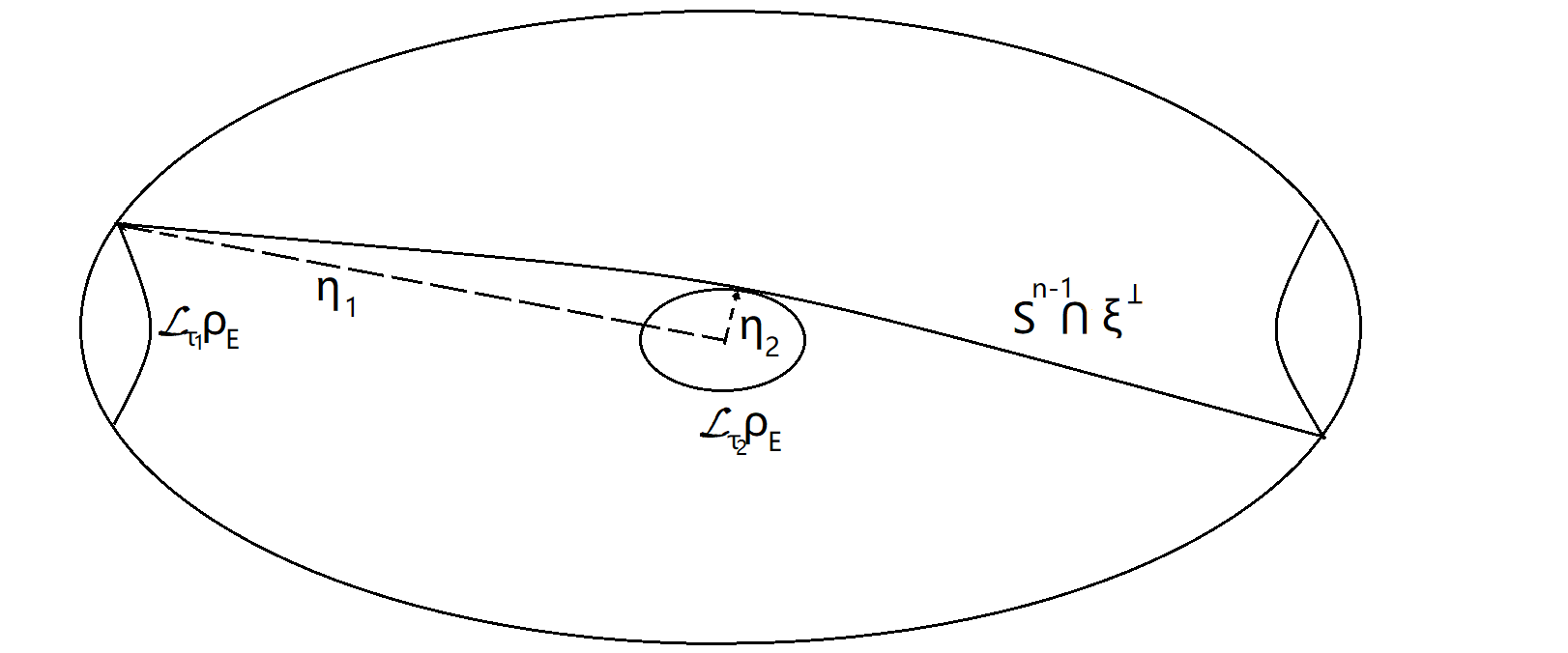}
			\caption{}\label{f1.1}
		\end{figure}
		
	Moreover, for any $\tau\in [\tau_2,\tau_1]$, we have
	$$
	\tilde{\phi}_{\xi,\eta_1}(L_{\tau}\rho_E\cap\xi^\perp)=L_{\tau}\rho_E\cap\xi^\perp=\tilde{\phi}_{\xi,\eta_2}(L_{\tau}\rho_E\cap\xi^\perp).
	$$
\end{lem}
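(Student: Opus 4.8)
The plan is to reduce the whole statement to elementary spectral theory of the quadratic form of $E$ restricted to $\xi^\perp$. Write $Q(x):=\sum_{i=1}^n x_i^2/a_i^2$, so that $E=\{Q(x)=1\}$; for a unit vector $\theta$ the relation $Q(\rho_E(\theta)\theta)=1$ gives $\rho_E(\theta)=Q(\theta)^{-1/2}$. Hence on the sphere $S^{n-1}\cap\xi^\perp$ the function $\rho_E$ is a strictly decreasing function of $Q$, so the maximal value $\tau_1$ of $\rho_E$ there corresponds to the minimum of $Q$ on the unit sphere of $\xi^\perp$, and $\tau_2$ to its maximum. Let $A$ be the self-adjoint operator on the $(n-1)$-dimensional space $\xi^\perp$ representing $Q|_{\xi^\perp}$, with eigenvalues $\mu_1\le\cdots\le\mu_{n-1}$. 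The extreme values of $Q$ on the unit sphere of $\xi^\perp$ are $\mu_1$ and $\mu_{n-1}$, attained exactly on the unit vectors of the corresponding eigenspaces, and we take $\eta_1,\eta_2$ to be unit eigenvectors for $\mu_1$ and $\mu_{n-1}$ respectively.

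The heart of the proof is that $\mu_1$ and $\mu_{n-1}$ are simple eigenvalues. By the Courant--Fischer min--max principle the eigenvalues of the restriction $A$ interlace the eigenvalues $a_1^{-2}<\cdots<a_n^{-2}$ of $Q$ on $\R^n$; in particular $\mu_2\ge a_2^{-2}$ and $\mu_{n-2}\le a_{n-1}^{-2}$. On the other hand, the hypothesis that $S^2\cap\xi^\perp$ meets $\{\rho_E>a_2\}$ and $\{\rho_E<a_{n-1}\}$ produces unit vectors $\theta',\theta''\in\xi^\perp$ with $Q(\theta')<a_2^{-2}$ and $Q(\theta'')>a_{n-1}^{-2}$, so that $\mu_1=\min_{\xi^\perp}Q\le Q(\theta')<a_2^{-2}\le\mu_2$ and $\mu_{n-1}=\max_{\xi^\perp}Q\ge Q(\theta'')>a_{n-1}^{-2}\ge\mu_{n-2}$. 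Thus $\mu_1<\mu_2$ and $\mu_{n-2}<\mu_{n-1}$, the extremal eigenspaces are lines, and $Q$ attains its minimum (resp.\ maximum) on the unit sphere of $\xi^\perp$ precisely at $\pm\eta_1$ (resp.\ $\pm\eta_2$). Since these four points lie on $S^2\cap\xi^\perp$ — which in the situation where the lemma is invoked is the whole sphere $S^{n-1}\cap\xi^\perp$ — we obtain $L_{\tau_1}\rho_E\cap S^2\cap\xi^\perp=\{\eta_1,-\eta_1\}$ and $L_{\tau_2}\rho_E\cap S^2\cap\xi^\perp=\{\eta_2,-\eta_2\}$; geometrically, $S^2\cap\xi^\perp$ is tangent to the corresponding level surface of $\rho_E$ at each of them (Figure \ref{f1.1}).

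For the last assertion, observe that for every $\tau\in[\tau_2,\tau_1]$ one has $L_\tau\rho_E\cap\xi^\perp=\{\theta\in S^{n-1}\cap\xi^\perp:Q(\theta)=\tau^{-2}\}$, so it is enough to show that $\tilde\phi_{\xi,\eta_1}$ and $\tilde\phi_{\xi,\eta_2}$ preserve $Q$ on $\xi^\perp$. Fix $j\in\{1,2\}$ and write $x\in\xi^\perp$ as $x=c\eta_j+y$ with $y\perp\eta_j$ in $\xi^\perp$; then $\tilde\phi_{\xi,\eta_j}(x)=c\eta_j-y$, and because $\eta_j$ is an eigenvector of $A$ we have $\langle A\eta_j,y\rangle=0$, whence
\[
Q(\tilde\phi_{\xi,\eta_j}(x))=Q(c\eta_j-y)=c^2Q(\eta_j)+Q(y)=Q(c\eta_j+y)=Q(x),
\]
the middle two equalities using $\langle A\eta_j,y\rangle=0$. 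As $\tilde\phi_{\xi,\eta_j}$ also preserves the Euclidean norm and maps $\xi^\perp$ onto itself, it maps $L_\tau\rho_E\cap\xi^\perp$ onto itself, which is the asserted identity.

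The main obstacle is the simplicity of the two extremal eigenvalues in the second paragraph: one has to translate the ``elliptic cone'' hypothesis about $S^2\cap\xi^\perp$ into the strict inequalities $\mu_1<a_2^{-2}$ and $\mu_{n-1}>a_{n-1}^{-2}$ and couple them with the interlacing bounds $\mu_2\ge a_2^{-2}$ and $\mu_{n-2}\le a_{n-1}^{-2}$. The remaining ingredients — rewriting $\rho_E$ through $Q$, identifying the extremizers with eigenvectors, and the invariance of a quadratic form under reflection along an eigendirection — are routine.
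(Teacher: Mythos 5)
Your proof is correct, but it takes a genuinely different route from the paper's. The paper establishes the uniqueness of $\pm\eta_1,\pm\eta_2$ geometrically: by its Lemma \ref{l2.3} the radial extensions of the extreme sublevel and superlevel sets of $\rho_E$ are elliptic cones around $e_1$ and $e_n$, and a hyperplane through the apex that meets such a cone only tangentially touches it along a single ray (with the degenerate cases $\tau_1=a_1$, $\tau_2=a_n$ treated separately); it then simply \emph{asserts} that the ellipsoid $E\cap\xi^\perp$ is reflection symmetric in its directions of maximal and minimal radius. You instead pass to the quadratic form $Q$ via $\rho_E=Q^{-1/2}$ and run everything through the spectrum of the compression $A$ of $Q$ to $\xi^\perp$: Cauchy interlacing gives $\mu_2\ge a_2^{-2}$ and $\mu_{n-2}\le a_{n-1}^{-2}$, the hypothesis gives the strict inequalities $\mu_1<a_2^{-2}$ and $\mu_{n-1}>a_{n-1}^{-2}$, hence the extremal eigenvalues are simple and the extremizers are exactly $\pm\eta_1,\pm\eta_2$. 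What your approach buys is twofold: it handles the boundary cases $\tau_1=a_1$, $\tau_2=a_n$ uniformly rather than separately, and it supplies an actual proof of the reflection-symmetry claim (the computation $Q(c\eta_j\pm y)=c^2Q(\eta_j)+Q(y)$ using $\langle A\eta_j,y\rangle=0$), which is the step the paper leaves as an appeal to the geometry of ellipsoids. What it gives up is the explicit elliptic-cone picture that the paper reuses later in the constructions. Both arguments are sound; yours is arguably the more complete one.
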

\begin{proof}
	Since the subsphere $S^{n-1}\cap \xi^\perp$ intersects $\{\rho_E(\theta)<a_{n-1}\}$ and $\{\rho_E(\theta)>a_2\}$, we have
	$$
	a_n\leq\tau_2<a_{n-2}<a_2<\tau_1\leq a_1.
    $$
    
    First, if $\tau_1=a_1$, 
    $$
    L_{\tau_1}\rho_E\cap S^2\cap \xi^\perp=\{e_1,-e_1\}
    $$ 
    and if $\tau_2=a_n$,
	$$
	L_{\tau_2}\rho_E\cap S^2\cap \xi^\perp=\{e_n,-e_n\}.
	$$

	Now by Lemma \ref{l2.3}, if $\tau_2\in (a_n,a_{n-1})$, the radial extension of $\{\rho_E\leq \tau_2\}\cap (e_n)^\perp_+$ is an elliptic cone around $e_n$ in $(e_n)^\perp_+$. Hence, there exists a unique point $\eta_2\in S^{n-1}$ such that $S^{n-1}\cap \xi^\perp\cap \{\rho_E\leq \tau_2\}\cap (e_n)^\perp_+=\{\eta_2\}$. By symmetry of the ellipsoid $E$, $S^{n-1}\cap \xi^\perp\cap \{\rho_E\leq \tau_2\}\cap (e_n)^\perp_-=\{-\eta_2\}$. Similarly, if $\tau_1\in (a_2,a_1)$, there exists a unique point $\eta_1\in S^{n-1}$ such that $S^{n-1}\cap \xi^\perp\cap \{\rho_E\geq \tau_1\}=\{\eta_1,-\eta_1\}$.
	
	Therefore, the level sets $L_{\tau_1}\rho_E\subset \{\rho_E(\theta)>a_2\}$ and $L_{\tau_2}\rho_E\subset \{\rho_E(\theta)<a_{n-1}\}$ satisfy
	$
	L_{\tau_1}\rho_E\cap S^{n-1}\cap \xi^\perp=\{\eta_1,-\eta_1\}
	$
	and
	$
	L_{\tau_2}\rho_E\cap S^{n-1}\cap \xi^\perp=\{\eta_2,-\eta_2\}
	$ (see Figure \ref{f1.1}).

	Note that $E\cap \xi^\perp$ is an $(n-1)$-dimensional ellipsoid with maximum radius at directions $\{\eta_1,-\eta_1\}$ and minimum radius  at directions $\{\eta_2,-\eta_2\}$; hence, $E\cap \xi^\perp$ is reflection symmetric in directions $\eta_1$ and $\eta_2$: for any $\theta\in S^{n-1}\cap \xi^\perp$,
	$$
	\rho_E(\tilde{\phi}_{\xi,\eta_1}(\theta))=\rho_E(\theta)
	$$
	and
	$$
	\rho_E(\tilde{\phi}_{\xi,\eta_2}(\theta))=\rho_E(\theta).
	$$
	This implies (see Figure 2)
	$$
	\tilde{\phi}_{\xi,\eta_1}(L_{\tau}\rho_E\cap \xi^\perp)=L_{\tau}\rho_E=\tilde{\phi}_{\xi,\eta_2}(L_{\tau}\rho_E\cap \xi^\perp).
	$$
	\begin{figure}[h]
		\center	\includegraphics[width=0.8\linewidth]{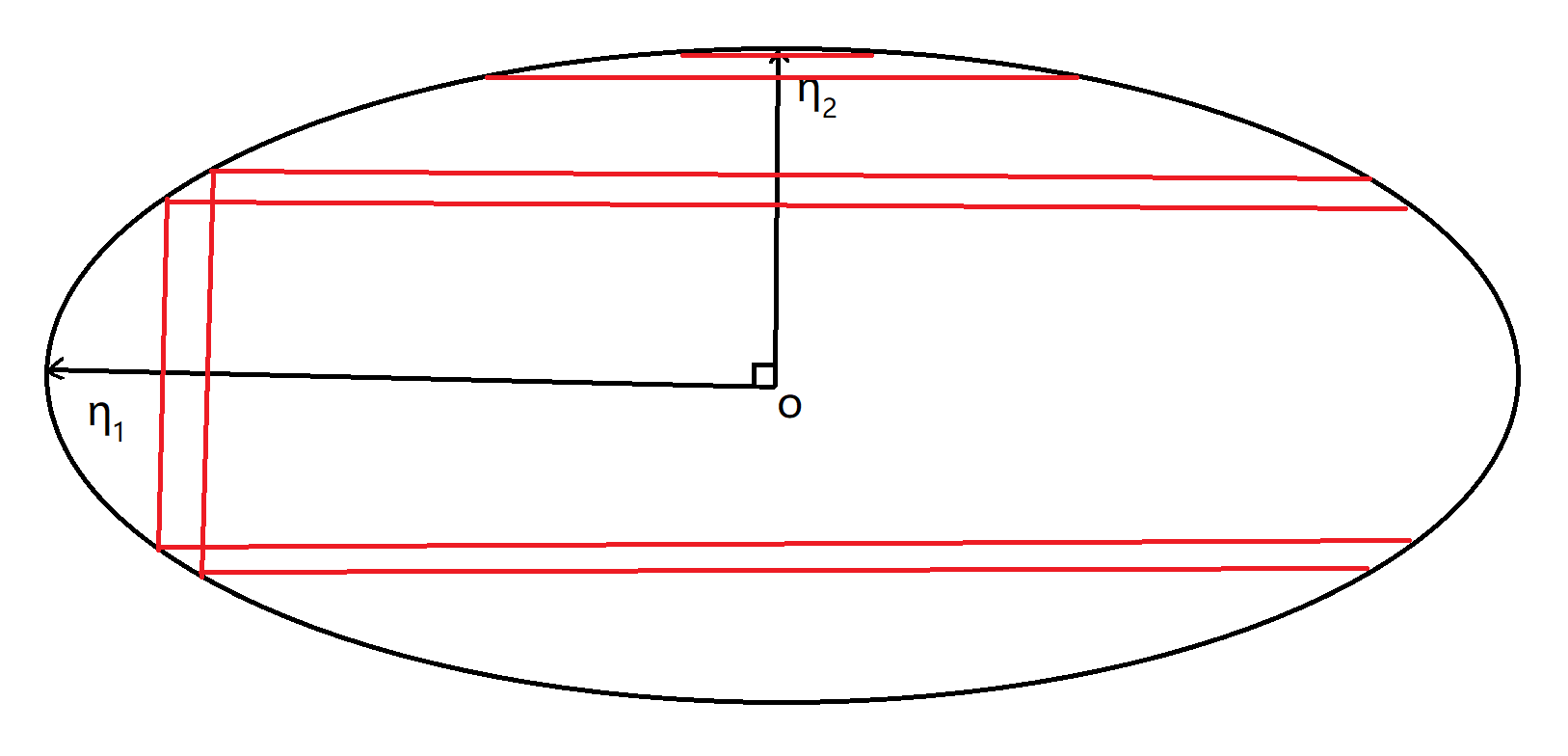}
		\caption{}\label{f1.2}
	\end{figure}	
\end{proof}

\section{Proof of Theorem \ref{t1.3}}\label{s3}
Our bodies will be some special perturbations of ellipsoids. For a fixed ellipsoid in $\R^n$ with base $\{e_1,e_2,\dots,e_n\}$ 
$$
E=\{x\in \R^n: \sum_{i=1}^n\frac{x_i^2}{a_i^2}=1\},
$$
where $a_1>a_2>\cdots>a_n>0$. Fix $\delta>0$ small enough such that 
$$
\delta<\min_{2\leq i\leq n}\{\frac{a_i-a_{i-1}}{2}\}.
$$
We define two sets on $S^{n-1}$ as 
$$
I_1:=\{\theta\in S^{n-1}: a_1-\rho_E(\theta)<\delta\mbox{ and }\langle\theta,e_1\rangle>0\}
$$
$$
I_2:= \{\theta\in S^{n-1}: \rho_E(\theta)-a_n<\delta\mbox{ and }\langle\theta,e_n\rangle>0\}.
$$
We define the convex body $K\subset  \R^n$ ($n\geq 3$) (see Figure 3) via its radial function $\rho_K\in C^2_+(S^{n-1})$ as
\begin{align*}
 \rho_K(\theta)=\rho_E(\theta)-\varepsilon(a_1-\rho_E(\theta)-\delta)^3 &\quad \mbox{when } \theta\in I_1;\\
 \rho_K(\theta)=\rho_E(\theta)+\varepsilon(\rho_E(\theta)-a_n-\delta)^3 &\quad \mbox{when } \theta\in I_2;\\
 \rho_K(\theta)=\rho_E(\theta) & \quad \mbox{otherwise}.
\end{align*}

Next, we define another convex body $L\subset  \R^n$ ($n\geq 3$) (see Figure 3) whose radial function $\rho_L\in C^2_+(S^{n-1})$ is
\begin{align*}
\rho_L(\theta)=\rho_E(\theta)-\varepsilon(a_1-\rho_E(\theta)-\delta)^3 &\quad \mbox{when } \theta\in -I_1;\\
	\rho_L(\theta)=\rho_E(\theta)+\varepsilon(\rho_E(\theta)-a_n-\delta)^3 &\quad \mbox{when }\theta\in I_2;\\
	\rho_L(\theta)=\rho_E(\theta) & \quad \mbox{otherwise}.
\end{align*}
Here, we choose $\varepsilon$ sufficiently small to guarantee that $K$ and $L$ are convex (see \cite[Page 41]{KY}).

\begin{figure}[h]
	\center	\includegraphics[width=0.4\linewidth]{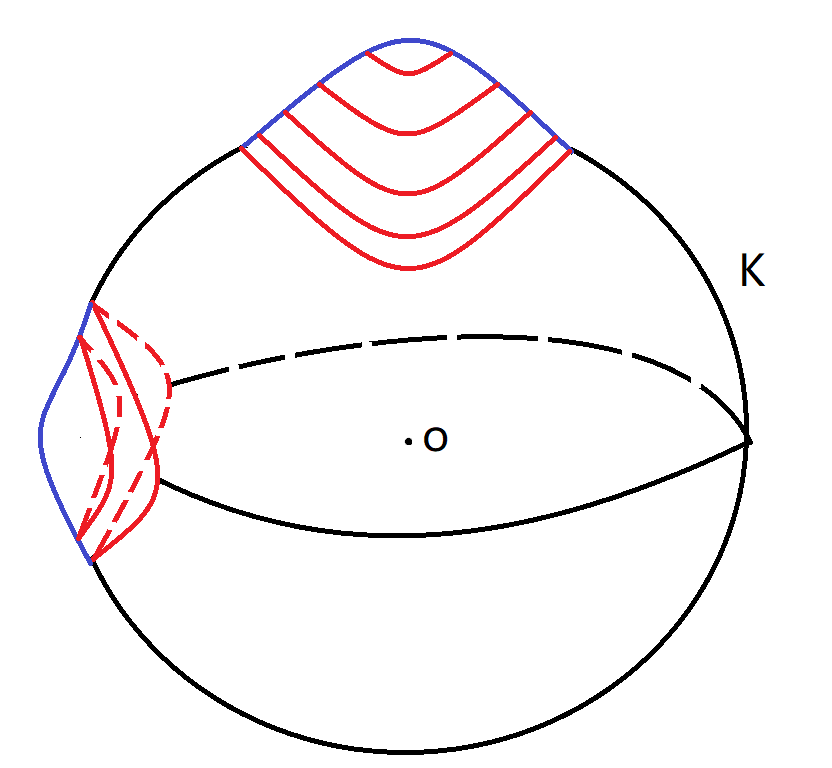}
	\includegraphics[width=0.4\linewidth]{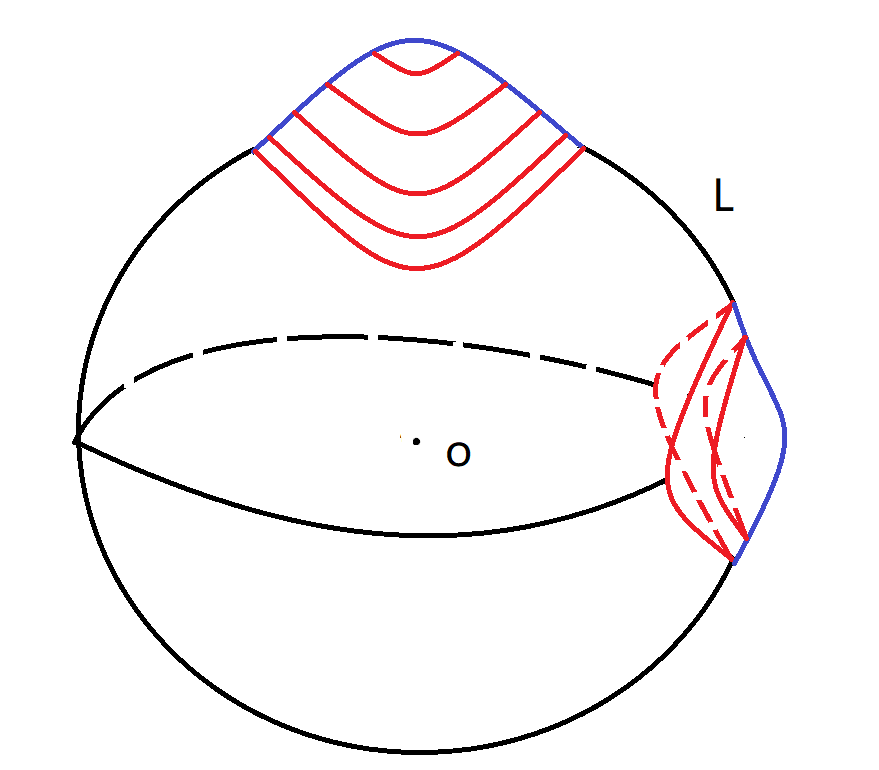}
	\caption{}\label{f1.3}
\end{figure}

Note that when $x\in I_1$, we have $-x\in -I_1$, then
\begin{align*}
\rho_K(-x)&=\rho_E(-x)-\varepsilon(a_1-\rho_E(-x)-\delta)^3\\
&=\rho_E(x)-\varepsilon(a_1-\rho_E(x)-\delta)^3=\rho_L(x)
\end{align*}
and
\begin{align*}
	\rho_K(x)-\rho_L(x)&=\rho_E(x)-(\rho_E(x)-\varepsilon(a_1-\rho_E(x)-\delta)^3)\\
	&=\varepsilon(a_1-\rho_E(x)-\delta)^3>0.
\end{align*}
When $x\in I_2$, we have
$$
\rho_L(x)=\rho_E(x)+\varepsilon(\rho_E(x)-a_n-\delta)^3=\rho_K(x)
$$
and $-x\notin I_2\cup I_1$, hence
\begin{align*}
	\rho_L(-x)=\rho_E(-x)=\rho_E(x)\neq \rho_E(x)+\varepsilon(\rho_E(x)-a_n-\delta)^3=\rho_K(x).
\end{align*}
Therefore, there exists some $\theta_1\in I_1$ and $\theta_2\in I_2$ such that $\rho_K(\theta_1)\neq \rho_L(\theta_1)$ and $\rho_K(\theta_2)\neq \rho_L(-\theta_2)$, which means $K$ is not equal to $L$ up to a reflection in the origin.

Now define the function
$$
h_1(x)=x-\varepsilon(a_1-x-\delta)^3,
$$
which is a strictly increasing function when $x\in [a_1-\delta,a_1]$ since
$$ 
h'_1(x)=1+3\varepsilon(a_1-x-\delta)^2>0
$$
and the function
$$
h_2(x)=x+\varepsilon(x-a_n-\delta)^3,
$$
which is a strictly increasing function when $x\in [a_n,a_n+\delta]$ since
$$ 
h'_2(x)=1+3\varepsilon(x-a_n-\delta)^2>0.
$$

Observe that for $\tau\in (a_1-\delta,a_1]$, the level set of $\rho_K(\theta)$ at value $h_1(\tau)$ is the level set of $\rho_E(\theta)$ at value $\tau$, i.e. 
\begin{equation}\label{e1}
\mathcal{L}_{h_1(\tau)} \rho_K=\mathcal{L}_{\tau} \rho_E\cap I_1.
\end{equation}
Similarly, for $\tau\in (a_1-\delta,a_1]$ we have 
\begin{equation}\label{e2}
\mathcal{L}_{h_1(\tau)} \rho_L=\mathcal{L}_{\tau} \rho_E\cap (-I_1)
\end{equation} 
 and for $\tau\in [a_n,a_n+\delta)$
\begin{equation}\label{e3}
\mathcal{L}_{h_2(\tau)} \rho_K=\mathcal{L}_{h_2(\tau)} \rho_L=\mathcal{L}_{\tau} \rho_E\cap I_2.
\end{equation} 

We consider several cases.

{\bf Case 1.} If the subsphere $S^{n-1}\cap\xi^\perp$ does not intersect $I_1$, we set $\phi_\xi$ to be the identity map. Then for any $\theta\in S^{n-1}\cap \xi^\perp$, $\rho_K(\theta)=\rho_L(\theta)=\rho_E(\theta)$.

{\bf Case 2.} If the subsphere $S^{n-1}\cap\xi^\perp$ only intersects $I_1$ but not $I_2$, note that for any $\theta\in S^{n-1}\cap \xi^\perp\cap (-I_1)$, $\rho_K(-\theta)= h_2(\rho_E(-\theta))= h_2(\rho_E(\theta))=\rho_L(\theta)$ and for any $\theta\in S^{n-1}\cap \xi^\perp\backslash (-I_1)$, $\rho_K(-\theta)=\rho_E(\theta)=\rho_L(\theta)$. Then we set $\phi_\xi$ to be the reflection in the origin.

{\bf Case 3.} If the subsphere $S^{n-1}\cap\xi^\perp$ intersects both $I_1$ and $I_2$, we need to choose a proper reflection. Now we set $\tau_1=\max\{\rho_E(\theta):\theta\in S^{n-1}\cap \xi^\perp\}$ and $\tau_2=\min\{\rho_E(\theta):\theta\in S^{n-1}\cap \xi^\perp\}$. By Lemma \ref{l2.5}, there exists $\eta_1=S^{n-1}\cap \xi^\perp\cap\mathcal{L}_{\tau_1} \rho_E\cap I_1$ and $\eta_2=S^{n-1}\cap \xi^\perp\cap\mathcal{L}_{\tau_2} \rho_E\cap I_2$ such that
$$
\tilde{\phi}_{\xi,\eta_1}(L_{\tau}\rho_E\cap \xi^\perp)=L_{\tau}\rho_E=\tilde{\phi}_{\xi,\eta_2}(L_{\tau}\rho_E\cap \xi^\perp)
$$
for any $\tau\in [\tau_2,\tau_1]$. Thus, together with Equation (\ref{e1}) and Equation (\ref{e2}), taking $\phi_\xi=\tilde{\phi}_{\xi,\eta_2}$, we have for any $\tau\in (a_1-\delta,\tau_1]$
\begin{align*}
	&\tilde{\phi}_{\xi,\eta_2}(\mathcal{L}_{ h_1(\tau)}\rho_K\cap \xi^\perp)\\
	=& \tilde{\phi}_{\xi,\eta_2}(\mathcal{L}_{\tau}\rho_E\cap \xi^\perp\cap I_1)\\
	=& \mathcal{L}_{\tau}\rho_E\cap \xi^\perp\cap (-I_1)\quad \mbox{(see Figure \ref{f1.2})}\\
	=&\mathcal{L}_{ h_1(\tau)}\rho_L\cap \xi^\perp.
\end{align*}
Hence, $\rho_K(\tilde{\phi}_{\xi,\eta_2}(\theta))=\rho_L(\theta)$ for $\theta\in -I_1\cap\xi^\perp$. On the other hand, by Lemma \ref{l2.5} and Equation (\ref{e3}), taking $\phi_\xi=\tilde{\phi}_{\xi,\eta_2}$, we have for any $\tau\in [\tau_2,a_n+\delta)$
\begin{align*}
	&\tilde{\phi}_{\xi,\eta_2}(\mathcal{L}_{ h_2(\tau)}\rho_K\cap \xi^\perp)\\
	=&\tilde{\phi}_{\xi,\eta_2}(\mathcal{L}_{\tau}\rho_E\cap \xi^\perp\cap I_2)\\
	=& \mathcal{L}_{\tau}\rho_E\cap \xi^\perp\cap I_2\quad \mbox{(see Figure \ref{f1.2})}\\
	=&\mathcal{L}_{h_2(\tau)}\rho_L\cap \xi^\perp.
\end{align*}
Hence, $\rho_K(\tilde{\phi}_{\xi,\eta_2}(\theta))=\rho_L(\theta)$ for $\theta\in I_2\cap\xi^\perp$. Finally, for any $\theta\in S^2\cap \xi^\perp \backslash (-I_1\cup I_2)$, $\tilde{\phi}_{\xi,\eta_2}(\theta)\in S^2\cap \xi^\perp \backslash (I_1\cup I_2)$, then $\rho_K(\tilde{\phi}_{\xi,\eta_2}(\theta))=\rho_E(\theta)=\rho_L(\theta)$. Therefore, taking $\phi_\xi=\tilde{\phi}_{\xi,\eta_2}$, we have
$$
\tilde{\phi}_{\xi,\eta_2}(K\cap \xi)=L\cap \xi.
$$

\begin{rem}
	The choices of $h_1$ and $h_2$ are not unique.
\end{rem}

\section{Proof of Theorem \ref{t1.5}}\label{s4}
The rest of this paper will show that the bodies constructed in Section \ref{s3} are exactly the required bodies in Theorem \ref{t1.5}.

Recall that given $\xi\in S^{n-1}$, in $\xi^\perp$, the reflection map in a unit vector $\eta$ is $\tilde{\phi}_{\xi,\eta}$ and the reflection map in a $2$-dimensional plane $H$ is $\tilde{\phi}_{\xi,H}$ (see Section \ref{s2}).
\begin{lem}\label{l2.6}
Let $n$ be even. For any $\xi\in S^{n-1}$ and $\eta\in S^{n-1}\cap \xi^\perp$, $$\tilde{\phi}_{\xi,\eta}\in SO(n-1).$$ Let $n$ be odd. For any $\xi\in S^{n-1}$ and $H\in G(n-1,2)$ in $\xi^\perp$, $$\tilde{\phi}_{\xi,H}\in SO(n-1).$$
\end{lem}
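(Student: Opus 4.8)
The plan is to realize each of these reflections as a linear isometry of the $(n-1)$-dimensional space $\xi^\perp$, diagonalize it in a suitably adapted orthonormal basis, and simply read off the determinant; membership in $SO(n-1)$ then comes down to a parity count.

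For the even case, fix $\xi\in S^{n-1}$ and $\eta\in S^{n-1}\cap\xi^\perp$. From the defining formula $\tilde\phi_{\xi,\eta}(x)=-x+2\langle x,\eta\rangle\eta$ one checks at once that $\tilde\phi_{\xi,\eta}$ maps $\xi^\perp$ into itself, is linear, preserves the inner product, and satisfies $\tilde\phi_{\xi,\eta}(\eta)=\eta$ while $\tilde\phi_{\xi,\eta}(v)=-v$ for every $v\in\xi^\perp$ with $v\perp\eta$. Completing $\eta$ to an orthonormal basis $\{\eta,f_2,\dots,f_{n-1}\}$ of $\xi^\perp$, the matrix of $\tilde\phi_{\xi,\eta}$ in this basis is $\mathrm{diag}(1,-1,\dots,-1)$ with exactly $n-2$ entries equal to $-1$. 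Hence $\tilde\phi_{\xi,\eta}\in O(n-1)$ and $\det\tilde\phi_{\xi,\eta}=(-1)^{n-2}$, which equals $1$ precisely when $n$ is even; so in that case $\tilde\phi_{\xi,\eta}\in SO(n-1)$.

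For the odd case, fix $\xi\in S^{n-1}$ and a $2$-plane $H\in G(n-1,2)$ inside $\xi^\perp$. The map $\tilde\phi_{\xi,H}$ is the orthogonal reflection of $\xi^\perp$ that fixes $H$ pointwise and acts as $-\mathrm{id}$ on $H':=H^\perp\cap\xi^\perp$; this is the isometric involution of $\xi^\perp$ compatible with the normalization $x+\tilde\phi_{\xi,H}(x)\in H$ for all $x\in\xi^\perp$ (write $x=u+v$ with $u\in H$ and $v\in H'$; then $x+\tilde\phi_{\xi,H}(x)=2u\in H$). Since $\dim H'=(n-1)-2=n-3$, choosing an orthonormal basis of $\xi^\perp$ consisting of two vectors spanning $H$ followed by $n-3$ vectors spanning $H'$ puts $\tilde\phi_{\xi,H}$ in the form $\mathrm{diag}(1,1,-1,\dots,-1)$ with $n-3$ entries equal to $-1$. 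Therefore $\tilde\phi_{\xi,H}\in O(n-1)$ and $\det\tilde\phi_{\xi,H}=(-1)^{n-3}$, which equals $1$ exactly when $n$ is odd; hence $\tilde\phi_{\xi,H}\in SO(n-1)$.

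There is no genuine obstacle here: the argument is essentially a one-line determinant computation. The only points that require care are bookkeeping ones — carrying out the eigenspace decomposition inside $\xi^\perp$ (dimension $n-1$) rather than in $\R^n$, so that the $-1$-eigenspace has dimension $n-2$ in the first case and $n-3$ in the second, and recording that $\tilde\phi_{\xi,H}$ really is the reflection with fixed subspace exactly $H$, so that the eigenvalue multiplicities are as stated.
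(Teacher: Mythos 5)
Your proof is correct and follows essentially the same route as the paper: diagonalize each reflection in an orthonormal basis of $\xi^\perp$ adapted to $\eta$ (resp.\ to $H$ and $H^\perp\cap\xi^\perp$), obtaining $n-2$ (resp.\ $n-3$) eigenvalues equal to $-1$, and conclude from the sign of the determinant. The extra verification that $\tilde\phi_{\xi,H}$ is indeed the orthogonal reflection fixing $H$ pointwise is a welcome bit of care, but nothing differs in substance.
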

\begin{proof}
	Let $n$ is even. For any $\eta\in S^{n-1}\cap \xi^\perp$ the reflection matrix $M_{\tilde{\phi}_{\xi,\eta}}$ of $\tilde{\phi}_{\xi,\eta}$  under the orthonormal basis $\{\zeta_1,\cdots,\zeta_{n-2},\eta\}$ of the subspace $\xi^\perp$ is
	$$
	\begin{bmatrix}
	-1 & 0 & \dots  & 0 &  0 \\
	0 & -1 & \dots  &0 & 0 \\
	\vdots & \vdots & \ddots & \vdots  & \vdots \\
	0 & 0 & \dots & -1  & 0\\
	0 &0 & \dots& 0   & 1
	\end{bmatrix}_{(n-1)\times (n-1)}.
	$$
	Then $\det(M_{\tilde{\phi}_{\xi,\eta}})=(-1)^{n-2}=1$ implies $\tilde{\phi}_{\xi,\eta}\in SO(n-1)$.
	
	Let $n$ be odd. For any $H\in G(n-1,2)$ being the $2$-dimensional subspace of $\xi^\perp$, set $\eta_2,\eta_3\in S^{n-1}\cap H$ to be perpendicular verctors. Then the reflection matrix $M_{\tilde{\phi}_{\xi,H}}$ of $\tilde{\phi}_{\xi,H}$  under the orthonormal basis $\{\zeta_1,\cdots,\zeta_{n-3},\eta_2,\eta_3\}$ of the subspace $\xi^\perp$ is
	$$
	\begin{bmatrix}
	-1 & 0 & \dots  & 0 &  0 \\
	\vdots & \ddots  & \vdots & \vdots  & \vdots \\
	0  & \dots &-1 & 0  & 0\\
	0 & \dots & 0 & 1  & 0\\
	0 & \dots &0 & 0   & 1
	\end{bmatrix}_{(n-1)\times (n-1)}.
	$$
	Hence $\det(M_{\tilde{\phi}_{\xi,H}})=(-1)^{n-3}=1$ implies $\tilde{\phi}_{\xi,H}\in SO(n-1)$.
\end{proof}

\begin{proof}[Proof of Theorem \ref{t1.5}]
Our goal is to find a rotation for every subsphere $S^{n-1}\cap \xi^\perp$. We again discuss in serveral cases.

{\bf Case 1.} If the subsphere $S^{n-1}\cap\xi^\perp$ does not intersect $I_1$, for any $\theta\in S^{n-1}\cap \xi^\perp$, we set $\phi_\xi$ to be the identity map, so $\rho_K(\theta)=\rho_E(\theta)=\rho_L(\theta)$.

{\bf Case 2.} If the subsphere $S^{n-1}\cap\xi^\perp$ only intersects $I_1$ but not $I_2$, we now choose a rotation. Now we set $\tau_1=\max\{\rho_E(\theta):\theta\in S^{n-1}\cap \xi^\perp\}$, then by Lemma \ref{l2.5}, $E\cap \xi^\perp$ is reflection symmetric in the direction $\eta_1=S^{n-1}\cap \xi^\perp\cap\mathcal{L}_{\tau_1} \rho_E\cap I_1$. Moreover since $S^{n-1}\cap\xi^\perp\cap E$ is an $(n-1)$-dimensional ellipsoid, denoted by $E_\xi$, we can construct an orthonormal basis $\mathcal{B}_\xi=\{\eta_1,\zeta_2,\cdots, \zeta_{n-1},\xi\}$ such that $E_\xi$ can be written as
$$
E_\xi=\{y\in \xi^\perp: \frac{y^2_1}{\tau^2_1}+\sum_{i=2}^{n-1}\frac{y^2_i}{\tilde{a}^2_i}=1\},
$$
where $(y_1,\dots,y_{n-1},y_n)$ is the coordinate vector of $y\in\R^n$ relative to basis $\mathcal{B}_\xi$. Here, $\tilde{a}_i=\rho_E(\zeta_i)$ for $i=2,\dots,n-1$. Note that $E_\xi$ is reflection symmetric in directions $\eta_1,\zeta_2,\cdots, \zeta_{n-1}$.

If $n$ is even, taking $\zeta_{n-1}$ from basis $\mathcal{B}_\xi$, by Lemma \ref{l2.6}, the reflection in $\zeta_{n-1}$ is exactly a rotation in $\xi^\perp$. Therefore, by the same argument in Lemma \ref{l2.5}, we have $\tilde{\phi}_{\xi,\zeta_{n-1}}(E_\xi)=E_\xi$. Then for any $\theta\in S^{n-1}\cap \xi^\perp\cap (-I_1)$, $\tilde{\phi}_{\xi,\zeta_{n-1}}(\theta)\in S^{n-1}\cap \xi^\perp\cap I_1$, that is  $$\rho_K(\tilde{\phi}_{\xi,\zeta_{n-1}}(\theta))= h_1(\rho_E(\tilde{\phi}_{\xi,\zeta_{n-1}}(\theta)))=  h_1(\rho_E(\theta))=\rho_L(\theta);$$ and for any $\theta\in S^{n-1}\cap \xi^\perp\backslash (-I_1)$, $\rho_K(\tilde{\phi}_{\xi,\zeta_{n-1}}(\theta))=\rho_E(\theta)=\rho_L(\theta)$. Hence taking $\phi_\xi=\tilde{\phi}_{\xi,\zeta_{n-1}}\in SO(n-1)$, we have
$$
\tilde{\phi}_{\xi,\zeta_{n-1}}(K\cap \xi^\perp)=L\cap \xi^\perp.
$$

If $n$ is odd, taking $\zeta_{n-2},\zeta_{n-1}$ from basis $\mathcal{B}_\xi$, by Lemma \ref{l2.6}, the reflection in the $2$-dimensional subspace $H$ spaned by $\zeta_{n-2},\zeta_{n-1}$ is exactly a rotation on $S^{n-1}\cap\xi^\perp$. Therefore, by the same argument in Lemma \ref{l2.5}, we have $\tilde{\phi}_{\xi,H}(E_\xi)=E_\xi$. Then for any $\theta\in S^{n-1}\cap \xi^\perp\cap (-I_1)$, $\tilde{\phi}_{\xi,H}(\theta)\in S^{n-1}\cap \xi^\perp\cap I_1$, that is  $$\rho_K(\tilde{\phi}_{\xi,H}(\theta))= h_1(\rho_E(\tilde{\phi}_{\xi,H}(\theta)))= h_1(\rho_E(\theta))=\rho_L(\theta);$$ and for any $\theta\in S^{n-1}\cap \xi^\perp\backslash (-I_1)$, $\rho_K(\tilde{\phi}_{\xi,H}(\theta))=\rho_E(\theta)=\rho_L(\theta)$. Hence, taking $\phi_\xi=\tilde{\phi}_{\xi,H}\in SO(n-1)$, we have
$$
\tilde{\phi}_{\xi,H}(K\cap \xi^\perp)=L\cap \xi^\perp.
$$

{\bf Case 3.} If the subsphere $S^{n-1}\cap\xi^\perp$ intersects both $I_1$ and $I_2$, we again choose a rotation. Here we use the same construction in Section \ref{s3}. Set $\tau_1=\max\{\rho_E(\theta):\theta\in S^{n-1}\cap \xi^\perp\}$ and $\tau_2=\min\{\rho_E(\theta):\theta\in S^{n-1}\cap \xi^\perp\}$. By Lemma \ref{l2.5}, there exists $\eta_1=S^{n-1}\cap \xi^\perp\cap\mathcal{L}_{\tau_1} \rho_E\cap I_1$ and $\eta_2=S^{n-1}\cap \xi^\perp\cap\mathcal{L}_{\tau_2} \rho_E\cap I_2$ such that $E\cap \xi^\perp$ is reflection symmetric in directions $\eta_1,\eta_2$. 
If $n$ is even, by Lemma \ref{l2.6} taking $\phi_\xi=\tilde{\phi}_{\xi,\eta_2}\in SO(n-1)$, we have
$$
\tilde{\phi}_{\xi,\eta_2}(K\cap \xi^\perp)=L\cap \xi^\perp.
$$

If $n$ is odd, since $E\cap\xi^\perp$ is an $(n-1)$-dimensional ellipsoid, denoted by $E_\xi$, we can construct an orthonormal basis $\mathcal{B}_\xi=\{\eta_1,\zeta_2,\cdots, \zeta_{n-2},\eta_2,\xi\}$ such that $E_\xi$ can be written as
$$
E_\xi=\{y\in \xi^\perp: \frac{y^2_1}{\tau^2_1}+\sum_{i=2}^{n-2}\frac{y^2_i}{\tilde{a}^2_i}+\frac{y^2_{n-1}}{\tau^2_2}=1\}
$$
where $(y_1,\dots,y_{n-2},y_{n-1},y_n)$ is the coordinate vector of $y\in\R^n$ relative to basis $\mathcal{B}_\xi$. Here, $\tilde{a}_i=\rho_E(\zeta_i)$ for $i=2,\dots,n-2$. Note that $E_\xi$ is reflection symmetric in directions $\eta_1,\zeta_2,\cdots, \zeta_{n-2},\eta_2$.

Finally, taking $\zeta_{n-2},\eta_2$ from basis $\mathcal{B}_\xi$, by Lemma \ref{l2.6}, the reflection in the $2$-dimensional subspace $H$ spaned by $\zeta_{n-2},\eta_2$ is exactly a rotation on $S^{n-1}\cap\xi^\perp$. Therefore, by the argument similar to Lemma \ref{l2.5}, we have $\tilde{\phi}_{\xi,H}(E_\xi)=E_\xi$. Then for any $\theta\in S^{n-1}\cap \xi^\perp\cap (-I_1)$, $\tilde{\phi}_{\xi,H}(\theta)\in S^{n-1}\cap \xi^\perp\cap I_1$, that is
$$
\rho_K(\tilde{\phi}_{\xi,H}(\theta))= h_1(\rho_E(\tilde{\phi}_{\xi,H}(\theta)))= h_1(\rho_E(\theta))=\rho_L(\theta);
$$ 
for any $\theta\in S^{n-1}\cap \xi^\perp \cap I_2$, $\tilde{\phi}_{\xi,H}(\theta)\in S^{n-1}\cap \xi^\perp\cap I_2$, that is
$$
\rho_K(\tilde{\phi}_{\xi,H}(\theta))= h_2(\rho_E(\tilde{\phi}_{\xi,H}(\theta)))= h_2(\rho_E(\theta))=\rho_L(\theta);
$$
and for any $\theta\in S^{n-1}\cap \xi^\perp\backslash (-I_1\cup I_2)$, $\rho_K(\tilde{\phi}_{\xi,H}(\theta))=\rho_E(\theta)=\rho_L(\theta)$. Hence taking $\phi_\xi=\tilde{\phi}_{\xi,H}\in SO(n-1)$, we have
$$
\tilde{\phi}_{\xi,H}(K\cap \xi^\perp)=L\cap \xi^\perp.
$$
\end{proof}

\begin{rem}
	In our construction all the reflections or rotations are involutions, that is $\phi\in O(n-1)$ with $\phi^2$ being the identity transformation. It is natural to ask the following question: Let $K,L\subset \R^n$ be convex bodies and $k\in [2,n-1]$ be integer numbers. If for any $H\in G(n,k)$, $K\cap H$ and $L\cap H$ are congruent, does there exist an integer $l\in [0,n-1]$ such that $K$ and $L$ coincide up to reflection in some $l$-dimensional subsphere?
\end{rem}
\section{Acknowledgement}

I would like to express my gratitude to my MSRI-adviser, Prof.~Dmitry Ryabogin, for fruitful discussions; especially, the question ``why the ellipsoid is special?'' gives me the initial idea of this paper.

This material is based upon work supported by the National Science Foundation under Grant No. DMS-1440140 while the author is in residence at the Mathematical Sciences Research Institute in Berkeley, California, during the Fall 2017 semester.

\end{document}